
\documentclass[12pt]{amsart}
\usepackage{amscd,amssymb}
\usepackage{graphicx,enumerate}

\usepackage{color}
\usepackage[arrow,matrix,arrow,ps,color,line,curve,frame]{xy}
\usepackage{pgf, tikz}
\usepackage{url}
\usepackage{enumerate}

\usepackage[colorinlistoftodos,bordercolor=red,backgroundcolor=red!20,linecolor=red,textsize=scriptsize]{todonotes}

\textwidth=17cm \oddsidemargin=0.3cm \evensidemargin=0.3cm \textheight=25cm

\marginparwidth=44pt
\let\oldlabel=\label

\def\prellabel{\marginparsep=1em
    \def\label##1{\oldlabel{##1}\ifmmode\else\ifinner\else
         \marginpar{{\footnotesize\ \\ \tt
                    ##1}}\fi\fi}}
                    


\def\B{\operatorname{B}}

\def\conv{\operatorname{conv}}

\def\vertex{\operatorname{vert}}
\def\inte{\operatorname{int}}
\def\Hilb{\operatorname{Hilb}}

\def\E{\mathcal{E}}

\def\e{\mathbf{e}}

\def\np{{\operatorname{NPol}}}

\def\U{{\operatorname{U}}}

\def\gp{\operatorname{gp}}

\def\RR{\mathbb R}

\def\ZZ{\mathbb Z}
\def\NN{\mathbb N}

\let\phi=\varphi
\let\epsilon=\varepsilon

\textwidth=14cm \textheight=22cm \topmargin=0.5cm \oddsidemargin=1.5cm \evensidemargin=1.5cm
\advance\headheight1.15pt \marginparwidth=2.5cm

\newtheorem{lemma}{Lemma}[section]
 
\newtheorem{theorem}[lemma]{Theorem}

\theoremstyle{definition} 
 
\newtheorem{remark}[lemma]{Remark}

\begin{document}

\title[Normal polytopes and ellipsoids]{Normal polytopes and ellipsoids}

\author{Joseph Gubeladze}

\address{Department of Mathematics\\
         San Francisco State University\\
         1600 Holloway Ave.\\
         San Francisco, CA 94132, USA}
\email{soso@sfsu.edu}


\subjclass[2010]{Primary 52B20; Secondary 11H06}

\keywords{Normal polytope, Unimodular cover, Ellipsoid}

\begin{abstract}
We show that: (1) unimodular simplices in a lattice 3-polytope cover a neighborhood of the boundary of the polytope if and only if the polytope is very ample, (2) the convex hull of lattice points in every ellipsoid in $\RR^3$ has a unimodular cover, and (3) for every $d\ge5$, there are ellipsoids in $\RR^d$, such that the convex hulls of the lattice points in these ellipsoids are not even normal. Part (c) answers a question of Bruns, Micha{\l}ek, and the author.
\end{abstract}

\maketitle

\section{Introduction}\label{INTRO}

\subsection{Main result} A convex polytope $P\subset\RR^d$ is \emph{normal} if it is \emph{lattice}, i.e., has vertices in $\ZZ^d$, and satisfies the condition
$$
\forall c\in\NN\quad\forall x\in(cP)\cap\ZZ^d\quad\exists x_1,\ldots,x_c\in P\cap\ZZ^d\qquad x_1+\cdots+x_c=x.
$$

A necessary condition for $P$ to be normal is that the subgroup 
$$
\gp(P):=\sum_{x,y\in P\cap\ZZ^d}\ZZ(x-y)\subset\ZZ^d
$$
must be a direct summand. Also, a face of a normal polytope is normal.

Normality is a central notion in toric geometry and combinatorial commutative algebra \cite{Kripo}. A weaker condition for lattice polytopes is \emph{very ample}; see Section \ref{preliminaries} for the definition. Normal polytopes define projectively normal embeddings of toric varieties whereas very ample polytopes correspond to normal projective varieties \cite[Proposition 2.1]{Ample}.

A sufficient condition for a lattice polytope $P$ to be normal is the existence of a \emph{unimodular cover,} which means that $P$ is a union of unimodular simplices. Unimodular covers play an important role in integer programming through their connection to the \emph{Integral {C}arath\'{e}odory Property}  \cite{NON-ICP,ICP,Schrijver}. 

There exist normal polytopes in dimensions $\ge5$ without unimodular cover \cite{Unico}. It is believed that all normal 3-polytopes have unimodular cover. But progress in this direction is scarce. Recent works \cite{3Smooth,3Covers} show that all lattice 3-dimensional parallelepipeds and centrally symmetric 3-polytopes with unimodular corners have unimodular cover.

The normality of the convex hull of lattice points in an ellipsoid naturally comes up in \cite{Jumps}. We consider general ellipsoids, neither centered at $0$ nor aligned with the coordinate axes. According to \cite[Theorem 6.5(c)]{Jumps}, the convex hull of the lattice points in any ellipsoid $E\subset\RR^3$ is normal. \cite[Question 7.2(b)]{Jumps} asks  whether this result extends to higher dimensional ellipsoids. 

Here we prove the following

\medskip\noindent{\bf Theorem.}\label{Main} \emph{Let $P\subset\RR^3$ be a lattice polytope, $E\subset\RR^d$ an ellipsoid, and $P(E)$ the convex hull of the lattice points in $E$.
\begin{enumerate}[\rm(a)]
\item The unimodular simplices in $P$ cover a neighborhood of the boundary $\partial P$ in $P$ if and only if $P$ is very ample.
\item If $d=3$ then the polytope $P(E)$ is covered by unimodular simplices.
\item For every $d\ge6$, there exists $E$ such that  $\gp(P(E))=\ZZ^d$ and $P(E)$ is not normal.
\end{enumerate}}

If in (c) we drop the condition $\gp(P(E))=\ZZ^d$, then ellipsoids $E\subset \RR^d$ with $P(E)$ non-normal already exist for $d=5$; see Remark \ref{Baranov}.

\subsection{Preliminaries}\label{preliminaries} $\ZZ_+$ and $\RR_+$ denote the sets of non-negative integers and reals, respectively.

The convex hull of a set $X\subset\RR^d$ is denoted by $\conv(X)$. The relative interior of a convex set $X\subset\RR^d$ is denoted by $\inte X$. The boundary of $X$ is denoted by $\partial X=X\setminus\inte X$.

Polytopes are assumed to be convex. For a polytope $P\subset\RR^d$, its vertex set is denoted by $\vertex(P)$.

A lattice $n$-simplex $\Delta=\conv(x_0,\ldots,x_n)\subset\RR^d$ is \emph{unimodular} if $\{x_1-x_0,\ldots,x_n-x_0\}$ is a part of a basis of $\ZZ^d$.

A \emph{unimodular pyramid} over a lattice polytope $Q$ is a lattice polytope $P=\conv(v,Q)$, where the point $v$ is not in the affine hull of $Q$ and the lattice height of $v$ above $Q$ inside the affine hull of $P$ equals 1.

Cones $C$ are assumed to be \emph{pointed, rational,} and \emph{finitely generated,} which means $C=\RR_+x_1+\cdots+\RR_+x_k$, where $x_1,\ldots,x_k\in\ZZ^d$ and $C$ does not contain a nonzero linear subspace. For a cone $C\subset\RR^d$, the smallest generating set of the additive submonoid $C\cap\ZZ^d\subset\ZZ^d$ consists of the indecomposable elements of this monoid. This is a finite set, called the \emph{Hilbert basis} of $C$ and denoted by $\Hilb(C)$. See \cite[Chapter 2]{Kripo} for a detailed discussion on Hilbert bases. For a  lattice polytope $P\subset\RR^d$, we have the inclusion of finite subsets of $\ZZ^{d+1}$:
\begin{align*}
\big(P\cap\ZZ^d,1\big)\subset\Hilb(\RR_+(P,1)).
\end{align*}
This inclusion is an equality if and only if $P$ is normal. 

A lattice polytope $P$ is \emph{very ample} if $\Hilb(\RR_+(P-v))\subset P-v$ for every vertex $v\in\vertex(P)$. All normal polytopes are very ample, but already in dimension $3$ there are very ample non-normal polytopes \cite[Exercise 2.24]{Kripo}. For a detailed analysis of the discrepancy between the two properties see \cite{Ample}. 

For a cone $C\subset\RR^d$, we say that $C$ has a \emph{unimodular Hilbert triangulation (cover)} if $C$ can be triangulated (resp., covered) by cones of the form $\RR_+x_1+\cdots+\RR x_n$, where $\{x_1,\ldots,x_n\}$ is a part of a basis of $\ZZ^d$ as well as of $\Hilb(C)$.

An \emph{ellipsoid} $E\subset\RR^d$ is a set of the form
\begin{align*}
\big\{x\in\RR^d\ |\ (l_1(x)-a_1)^2+\cdots+(l_d(x)-a_d)^2=1\big\}\subset\RR^d,
\end{align*}
where $l_1,\ldots,l_d$ is a full-rank system of real linear forms and $a_1,\ldots,a_d\in\RR^d$. 


For a lattice polytope $P$, the union of unimodular simplices in $P$ will be denoted by $\U(P)$.

\section{Unimodular covers close to the boundary}\label{Dim_3} 

The following result of  Seb\H{o} was later rediscovered in \cite{Aguzzoli,Bouvier} in  a refined form in the context of toric varieties.

\begin{theorem}(\cite{Sebo})\label{Sebo-thm}
Every 3-dimensional cone $C$ has a unimodular Hilbert triangulation.
\end{theorem}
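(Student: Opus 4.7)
The plan is to reduce to the simplicial case and then induct on the multiplicity. First, I would triangulate $C$ into simplicial sub-cones whose extremal rays are generated by elements of $\Hilb(C)$ using a pulling (or placing) triangulation, taken with respect to an ordering of $\Hilb(C)$ that begins with the extreme ray generators of $C$. This triangulation respects the face structure of $C$ and yields a fan; provided every simplicial piece admits a unimodular Hilbert triangulation whose ray generators lie in $\Hilb(C)$, the pieces will glue together into a unimodular Hilbert triangulation of $C$. Thus the problem reduces to a simplicial $3$-cone $C = \RR_+ v_1 + \RR_+ v_2 + \RR_+ v_3$ with $v_1, v_2, v_3 \in \Hilb(C)$.

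I would then induct on the multiplicity $m(C) := |\det(v_1, v_2, v_3)|$. If $m(C) = 1$ then $\Hilb(C) = \{v_1, v_2, v_3\}$ and $C$ is already unimodular. If $m(C) \ge 2$, then $\Hilb(C) \setminus \{v_1, v_2, v_3\}$ is nonempty, and I pick $h$ in it. A standard argument with $h = \sum t_i v_i$ forces $0 \le t_i < 1$ for all $i$, since otherwise $h - v_i \in C \cap \ZZ^3 \setminus \{0\}$ would give a nontrivial decomposition of $h$. The star subdivision of $C$ at the ray $\RR_+ h$ produces at most three simplicial sub-cones covering $C$ and meeting in common faces; a direct determinant computation gives multiplicity $t_k \cdot m(C) < m(C)$ for each full-dimensional piece, where $k$ is the index omitted from the pair $(v_i,v_j)$ spanning the opposite face. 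Hence the multiplicity strictly drops at each step, guaranteeing termination.

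The main obstacle, and the genuinely three-dimensional ingredient, is the key lemma that $\Hilb(C') \subseteq \Hilb(C)$ for each sub-cone $C'$ produced by the subdivision. Without this containment the induction breaks down, because a subsequent unimodular Hilbert triangulation of $C'$ would employ ray generators that are not Hilbert basis elements of the original $C$. To prove the lemma I would take an element $p$ indecomposable in $C' \cap \ZZ^3$ and argue by contradiction: any decomposition $p = u + w$ inside $C \cap \ZZ^3$ can be transferred into $C'$ by exploiting the small-coefficient expansion $h = \sum t_i v_i$ with $0 \le t_i < 1$ together with the fact that the cross-section of $C'$ is a lattice triangle, on which the decomposition combinatorics can be controlled directly. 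It is precisely this triangular cross-section that makes dimension $3$ special; in higher dimensions the analogous statement fails, consistent with the known existence of cones in dimension $\ge 4$ without a unimodular Hilbert triangulation.
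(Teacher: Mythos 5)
Your overall frame (reduce to simplicial cones spanned by Hilbert basis elements, then descend on multiplicity via star subdivision at some $h=t_1v_1+t_2v_2+t_3v_3\in\Hilb(C)$ with $0\le t_i<1$) is the standard, dimension-independent part of the story; by itself it proves only that $C$ has a unimodular triangulation with rays in $C\cap\ZZ^3$, which is classical in every dimension. Everything therefore hinges on your ``key lemma'' $\Hilb(C')\subseteq\Hilb(C)$ for the star-subdivision pieces, and that lemma is \emph{false} as stated, already in dimension $3$, when $h$ is an arbitrary element of $\Hilb(C)$. Take $C=\RR_+v_1+\RR_+v_2+\RR_+v_3$ with $v_1=(1,0,0)$, $v_2=(0,1,0)$, $v_3=(1,2,5)$, of multiplicity $5$. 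The nonzero lattice points of the fundamental box are $(1,1,1),(1,1,2),(1,2,3),(1,2,4)$; since the barycentric coordinates of a box point are the sums of those of its summands, any decomposition of a box point uses box points, and all four have first coordinate $1$, so all four are indecomposable and $\Hilb(C)=\{v_1,v_2,v_3,(1,1,1),(1,1,2),(1,2,3),(1,2,4)\}$. In particular $(1,2,2)=v_2+(1,1,2)$ is \emph{not} in $\Hilb(C)$. Now subdivide at $h=(1,1,1)=\frac45v_1+\frac35v_2+\frac15v_3$, a legitimate choice by your criteria, and consider the piece $C'=\RR_+v_2+\RR_+v_3+\RR_+h$ of multiplicity $4$. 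A direct computation shows the nonzero box points of $C'$ are exactly $(1,2,2)=\frac34v_2+\frac14v_3+\frac34h$, $(1,2,3)$, and $(1,2,4)$, again all with first coordinate $1$; by the same additivity argument $(1,2,2)$ is indecomposable in $C'\cap\ZZ^3$. Hence $(1,2,2)\in\Hilb(C')\setminus\Hilb(C)$, and your induction breaks exactly where you said it must not.

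The gap is not reparable by the local, cross-sectional argument you gesture at: notice that nothing in your sketch genuinely uses $d=3$ --- the estimate $0\le t_i<1$, the multiplicity drop $t_k\cdot m(C)$, and the plan to ``transfer a decomposition into $C'$'' read verbatim in any dimension, where the conclusion fails (the $4$-dimensional cones of \cite{Bouvier}, cited in the paper, admit no unimodular Hilbert triangulation at all). In the example above a different choice, $h=(1,1,2)$, avoids the defect at the first step, which shows that the entire content of the theorem is a \emph{selection rule} for the subdividing rays, made coherently across the whole recursion --- and your proposal contains neither the rule nor an argument for it; the sentence ``the decomposition combinatorics can be controlled directly'' is precisely the theorem being assumed. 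This is why the actual proofs (Seb\H{o}'s original argument \cite{Sebo} and its toric rediscoveries \cite{Aguzzoli,Bouvier}) are substantially more delicate. For calibration: the paper itself does not prove this statement but quotes it from \cite{Sebo}, so the missing lemma cannot be outsourced to anything in the present text either.
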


\noindent\emph{Notice.} There exist 4-dimensional cones without unimodular Hilbert triangulation \cite{Bouvier} and it is not known whether all 4- and 5-dimensional cones have unimodular Hilbert cover. According to \cite{Unico}, in all dimensions $\ge6$ there are cones without unimodular Hilbert cover.

\medskip If $P\subset\RR^3$ is very ample, then by Theorem \ref{Sebo-thm}, for every $v\in\vertex(P)$, the cone $\RR_+(P-v)$ has a unimodular Hilbert triangulation:
$$
\RR_+(P-v)=\bigcup_{T(v)}C_t,
$$
where $T(v)$ is a finite index set, depending on $v$. In particular, the following unimodular simplices form a neighborhood of $v$ in $P$: 
$$
\Delta_{v,t}=\conv(\Hilb(C_t),0)+v,\quad t\in T(v).
$$
Also, lattice polygons have unimodular triangulation \cite[Corollary 2.54]{Kripo}. Therefore, the following lemma completes the proof of Theorem (a):

\begin{lemma}\label{PROP} For a lattice polytope $P$ of an arbitrary dimension, the following conditions are equivalent:
\begin{enumerate}[\rm(a)]
\item $\U(P)$ is a neighborhood of $\partial P$ within $P$;
\item $\U(P)$ is a neighborhood within $P$ of every vertex of $P$ and $\partial P\subset\U(P)$.
\end{enumerate} 
\end{lemma}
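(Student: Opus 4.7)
The direction (a)$\Rightarrow$(b) is immediate, as every vertex of $P$ lies on $\partial P$, so a $P$-neighborhood of $\partial P$ is in particular a $P$-neighborhood of every vertex, and $\partial P \subset \U(P)$ is forced by the definition of ``neighborhood'' in (a).

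For (b)$\Rightarrow$(a), I would argue by contradiction with a minimal-dimension reduction. Suppose (b) holds but some $y^{\ast}\in\partial P$ fails to have a $P$-neighborhood in $\U(P)$. Among all such failure points, choose $y^{\ast}$ lying in the relative interior of a face $F_0\subset P$ of minimum dimension $k$. By (b) we have $k\geq 1$; by minimality of $k$, $\U(P)$ is a $P$-neighborhood of every point of every proper face $F\subsetneq F_0$; and by compactness of $\partial F_0$ there exists $\eta>0$ with $\{z\in P:\operatorname{dist}(z,\partial F_0)<\eta\}\subset\U(P)$.

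Since $\U(P)$ is closed (a finite union of closed simplices) and is not a $P$-neighborhood of $y^{\ast}$, one can pick $z_n\in P\setminus\U(P)$ with $z_n\to y^{\ast}$; from $\partial P\subset\U(P)$, each $z_n\in\inte P$. Passing to a subsequence, the normalized displacements $(z_n-y^{\ast})/\|z_n-y^{\ast}\|$ converge to some unit vector $u$ in the tangent cone $\RR_+(P-y^{\ast})=V_{F_0}+N_{F_0}^P$, where $V_{F_0}$ is the translate of $\Aff(F_0)$ through the origin and $N_{F_0}^P$ is the inner normal cone of $F_0$ in $P$.

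The contradiction is now sought by case analysis on $u$. In the tangential case $u\in V_{F_0}$, the approximating points $y^{\ast}+\|z_n-y^{\ast}\|u\in\inte F_0$ lie within $o(\|z_n-y^{\ast}\|)$ of $z_n$; refining the choice of $y^{\ast}$ to additionally minimize $\operatorname{dist}(\cdot,\partial F_0)$ over failure points in $F_0$ should place these approximating points inside the $\eta$-tube for large $n$, so that $z_n\in\U(P)$, a contradiction. I expect the transversal case $u\notin V_{F_0}$ to be the main obstacle: here $z_n$ approaches $y^{\ast}$ from $\inte P$ in a direction that genuinely escapes $F_0$, so the reduction to $\partial F_0$ does not apply, and one must instead leverage the finitely many unimodular simplices that cover the $P$-neighborhood of a vertex of $F_0$ (guaranteed by (b)) to produce a full-dimensional unimodular simplex in $P$ whose tangent cone at $y^{\ast}$ contains $u$, again contradicting $z_n\notin\U(P)$.
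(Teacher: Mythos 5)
Your direction (a)$\Rightarrow$(b) is fine, but your (b)$\Rightarrow$(a) argument has a genuine gap, and it sits exactly where the content of the lemma lies: the transversal case. You correctly flag it as ``the main obstacle,'' but then only declare what one \emph{must} do --- produce a full-dimensional unimodular simplex whose tangent cone at $y^{\ast}$ contains $u$ --- without any mechanism for doing so. The simplices guaranteed by (b) cover a neighborhood of a \emph{vertex} $v$ of $F_0$; they sit near $v$ and in general do not contain $y^{\ast}\in\inte F_0$, so they have no tangent cone at $y^{\ast}$ at all. What is needed is a way to transport the transversal structure at $v$ along $F_0$ to $y^{\ast}$, and that rests on a lattice fact your sketch never touches: if a unimodular simplex meets the face $F_0$ in a $\dim(F_0)$-dimensional face, it is the join of that face with the opposite face $G$, and replacing the $F_0$-part by \emph{any} unimodular $\dim(F_0)$-simplex $\Delta'\subset F_0$ again yields a unimodular simplex $\conv(G,\Delta')$. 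This is precisely how the paper argues: it takes a unimodular cover $T_F$ of $F_0$ by $\dim(F_0)$-simplices (available because $\partial P\subset\U(P)$ and the intersection of any covering simplex with the face $F_0$ is a face of that simplex, hence unimodular), takes the subfamily of the vertex cover at $v$ consisting of simplices meeting $F_0$ full-dimensionally, passes to their faces opposite to $F_0$, and shows that the joins of these opposite faces with the simplices of $T_F$ are unimodular and cover a neighborhood of $y^{\ast}$ in $P$ --- handling all approach directions at once, with no case split, no sequences, and no compactness.

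Your tangential case is also broken as written. The approximating points $y^{\ast}+\|z_n-y^{\ast}\|u$ converge to $y^{\ast}$, which lies at a fixed positive distance from $\partial F_0$ since $y^{\ast}\in\inte F_0$; so for large $n$ they are \emph{not} in the $\eta$-tube around $\partial F_0$, no matter how you refine the choice of $y^{\ast}$. Minimizing $\operatorname{dist}(\cdot,\partial F_0)$ over the failure set (which is closed, so a minimizer exists, and the minimum $\delta$ is positive by minimality of $k$) does not rescue this: the non-failure points of $\inte F_0$ near $y^{\ast}$ admit $\U(P)$-neighborhoods of no uniform radius, so $z_n\in\U(P)$ does not follow from $z_n$ being within $o(\|z_n-y^{\ast}\|)$ of them. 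The join construction above repairs this case as well, since it directly exhibits a $\U(P)$-neighborhood of every boundary point, making the whole contradiction framework unnecessary.
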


\begin{proof} The implication (a)$\Longrightarrow$(b) is obvious.


\medskip For the opposite implication, let:
\begin{enumerate}[\rm$\centerdot$]
\item $x\in\partial P$;
\item $F$ be the minimal face of $P$ containing
$x$;
\item $v\in\vertex(F)$;
\item $T_F$ be a unimodular cover of $F$ with $\dim(F)$-simplices, contained in $F$;
\item $T_v$ be a unimodular cover of a neighbourhood of $v$ in $P$;
\item $T_{v,F}$ be the sub-family of $T_v$, consisting of simplices that have a $\dim(F)$-
dimensional intersection with $F$;
\item $T_v/F$ be the collection of faces of simplices in $T_{v,F}$, opposite to $F$ (that is, from each simplex in $T_{v,F}$ remove the $\dim(F) + 1$ vertices that lie in $F$, so that one is left with a $(\dim(P)-\dim(F))$-simplex).
\end{enumerate}
Then, the collection of $\conv(T_v/F,T_F)$ covers a neighbourhood of $x$ in $P$ and consists of unimodular simplices.
\end{proof}

\section{Unimodular covers inside ellipsoids}\label{Dim3}

\subsection{Proof of Theorem (b)} The set of normal polytopes $P\subset\RR^d$ carries a poset structure, where the order is generated by the elementary relation
$$
P\le Q\ \text{if}\ P\subset Q\ \text{and}\  \#(Q\cap\ZZ^d)=\#(P\cap\ZZ^d)+1.
$$ In \cite{Jumps} this poset is denoted by $\np(d)$. The trivial minimal elements of $\np(d)$ are the singletons from $\ZZ^d$. It is known that $\np(d)$ has nontrivial minimal elements for $d\ge4$ \cite[Exercise 2.27]{Kripo} and the first maximal elements for $d=4,5$ were found in \cite{Jumps}. It is possible that $\np(d)$ has isolated elements for some $d$.

Computer searches so far have found neither maximal nor nontrivial minimal elements in $\np(3)$ \cite{Jumps}. The next lemma is yet another evidence that all normal 3-polytopes have unimodular cover.

\begin{lemma}\label{contraction}
Let $P$ be a normal 3-polytope. If $*\le P$ in $\np(3)$ for a singleton $*\in\ZZ^3$ then $P=\U(P)$.
\end{lemma}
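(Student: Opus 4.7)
My plan is induction on the length $n$ of a chain $\{*\} = P_0 < P_1 < \cdots < P_n = P$ in $\np(3)$; the base case is trivial. Set $Q := P_{n-1}$, for which $Q = \U(Q)$ by the inductive hypothesis, and let $v$ be the unique new lattice point of $P = \conv(Q \cup \{v\})$, necessarily a vertex of $P$. Cases with $\dim P \le 2$ follow at once from unimodular triangulations of lattice polygons \cite[Corollary 2.54]{Kripo}; the cases with $\dim P = 3$ are the pyramid case $\dim Q = 2$ and the main case $\dim Q = 3$.

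In the pyramid case, $Q$ is a facet of $P$ and hence normal, so after a lattice change of coordinates $\gp(Q) = \ZZ^2 \times \{0\}$. If $h$ denotes the lattice height of $v$ above the plane of $Q$, then $\gp(P) = \gp(Q) + \ZZ(v - q_0) = \ZZ^2 \times h\ZZ$ for any $q_0 \in Q \cap \ZZ^3$; normality of $P$ demands that $\gp(P)$ be a direct summand of $\ZZ^3$, which forces $h = 1$. Thus $P$ is a unimodular pyramid over $Q$, and coning a unimodular triangulation of $Q$ to $v$ produces a unimodular triangulation of $P$. In the main case $\dim Q = \dim P = 3$, the strategy is to combine $\U(Q) = Q$ with unimodular simplices near $v$ produced by applying Theorem \ref{Sebo-thm} to the vertex cone $C_v = \RR_+(P - v)$. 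Choose a unimodular Hilbert triangulation $C_v = \bigcup_t C_t$; normality (hence very ampleness) of $P$ gives $\Hilb(C_v) \subset P - v$, and since $v$ is the only lattice point of $P$ outside $Q$ we in fact have $v + \Hilb(C_v) \subset Q \cap \ZZ^3$. Each $C_t$ generated by $u_1, u_2, u_3 \in \Hilb(C_v)$ contributes a unimodular $3$-simplex $\Delta_{v,t} = \conv(v, v+u_1, v+u_2, v+u_3) \subset P$, and with $N := \bigcup_t \Delta_{v,t} = \conv(\{v\} \cup (v + \Hilb(C_v)))$ the lemma reduces to the inclusion $P \setminus Q \subset N$.

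The step I expect to be the main obstacle is verifying this inclusion. For $p \in P \setminus Q$, the point $p$ lies in $\conv(v, F) \setminus F$ for some facet $F$ of $Q$ visible from $v$, and because $v$ lies strictly on the opposite side of the supporting hyperplane of $F$ from the interior of $Q$, one checks that $\conv(v, F) \cap Q = F$, so the open segment from $v$ to $p$ is disjoint from $Q$. Writing $p - v = \sum_{i=1}^{3} \alpha_i u_i$ with $\alpha_i \ge 0$ inside the sub-cone $C_t$ containing $p - v$: if $\sum_i \alpha_i > 1$, then $v + (\sum_i \alpha_i)^{-1}(p-v)$ lies simultaneously on the open segment $vp$ and inside $\conv(v+u_1,v+u_2,v+u_3) \subset Q$, a contradiction. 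Hence $\sum_i \alpha_i \le 1$ and $p \in \Delta_{v,t} \subset N$, completing the inductive step.
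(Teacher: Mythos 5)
Your proof is correct, and although it shares the paper's overall skeleton---induction along a chain of elementary relations in $\np(3)$, low dimensions via unimodular triangulations of lattice polygons, and the dimension-jump case via a unimodular pyramid---it takes a genuinely different route at the decisive step. The paper disposes of the case $\dim Q=\dim P=3$ in one line: $P$ is normal, hence very ample, so by Theorem~(a) the set $\U(P)$ is a neighborhood of $\partial P$, and the paper asserts $P\setminus\U(P)\subset Q$, whence $P=\U(P)$ since $Q=\U(Q)$. You instead invoke Seb\H{o}'s theorem only at the single new vertex $v$ and prove directly that the corner simplices $\Delta_{v,t}$ cover the sliver $P\setminus Q$: if $p-v=\sum_i\alpha_iu_i$ in a unimodular Hilbert subcone with $\sum_i\alpha_i>1$, then $v+\bigl(\sum_i\alpha_i\bigr)^{-1}(p-v)$ lies in $\conv(v+u_1,v+u_2,v+u_3)\subset Q$ while also lying on the open segment $(v,p)$, which your visibility decomposition shows is disjoint from $Q$. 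This buys two things: your argument bypasses the entire edge-and-facet analysis that goes into Theorem~(a), and it actually substantiates the paper's terse step---the bare statement of Theorem~(a) only provides \emph{some} neighborhood of $\partial P$, with no a priori guarantee that it swallows the fixed-size sliver $P\setminus Q$, and your segment argument is exactly the quantitative justification that makes the inclusion $P\setminus\U(P)\subset Q$ transparent. Conversely, the paper's route is shorter given that Theorem~(a) must be proved anyway for part~(a) of the main theorem. Two cosmetic slips, neither affecting validity: your parenthetical identity $\bigcup_t\Delta_{v,t}=\conv(\{v\}\cup(v+\Hilb(C_v)))$ is false in general, since the union need not be convex when Hilbert basis elements sit at different heights (the inclusion $\subset$ always holds, and your covering argument only ever uses the union, so nothing breaks); and in the pyramid case the direct-summand argument yields $|h|=1$ rather than $h=1$. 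As a bonus, that pyramid argument supplies a proof of the assertion, left unproved in the paper, that an elementary relation with $\dim Q<\dim P$ forces $P$ to be a unimodular pyramid over $Q$.
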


\begin{proof}
If $Q\le P$ is an elementary relation in $\np(d)$ and $\dim Q<\dim P$ then $P$ is a unimodular pyramid over $Q$. In this case every full-dimensional unimodular simplex $\Delta\subset P$ is the unimodular pyramid  over a unimodular simplex in $Q$ and with the same apex as $P$. On the other hand, lattice segments and polygons are unimodularly triangulable. Therefore, it is enough to show that a polytope $P\in\np(3)$ has a unimodular cover if there is a 3-polytope $Q\in\np(3)$, such that $Q$ has a unimodular cover and $Q\le P$ is an elementary relation in $\np(3)$. Assume $\{v\}=\vertex(P)\setminus Q$. By Theorem (a) we have the inclusion $P\setminus\U(P)\subset Q$. Since $Q=\U(Q)$ we have $P=\U(P)$.
\end{proof}

Call a subset $\E\subset\ZZ^d$ \emph{ellipsoidal} and a point $v\in\E$ \emph{extremal} if there is an ellipsoid $E\subset\RR^d$, such that $\E=\conv(E)\cap\ZZ^d$  and $v\in E$.

\begin{lemma}\label{cutoff}
Let $\E\subset\RR^d$ be an ellipsoidal set. Then $\E$ has an extremal point and $\E\setminus\{v\}$ is also ellipsoidal for every extremal point $v\in\E$. 
\end{lemma}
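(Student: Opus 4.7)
The plan is to handle the two assertions separately, producing the desired ellipsoids by explicit construction.

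\emph{Existence of an extremal point.} Assume $\E\neq\emptyset$, and write $E=\{x\in\RR^d:q(x)=1\}$ where $q(x)=(x-c_0)^T M_0(x-c_0)$ with center $c_0$ and positive definite quadratic form $M_0$. Set $\lambda^*=\max_{w\in\E}q(w)\in[0,1]$. If $\lambda^*>0$, the rescaled ellipsoid $\{x:q(x)=\lambda^*\}$ is contained in $\conv(E)$, still contains all of $\E$, and passes through the lattice point(s) achieving the maximum, which are therefore extremal. If $\lambda^*=0$, then $\E=\{c_0\}\subset\ZZ^d$, and one chooses a tiny ellipsoid with $c_0$ on the boundary and no other lattice point inside (for instance, the boundary of a ball of radius $\tfrac13$ centered at $c_0+\tfrac13 e_1$).

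\emph{Removing an extremal point via perturbation.} Given an extremal $v\in\E$, consider the family of ellipsoids parametrized by $(M,c)=(M_0+\delta N,\,c_0+\delta b)$ for symmetric $N$ and $b\in\RR^d$. A direct expansion gives
\[
(x-c)^T M(x-c)=q(x)+\delta\, g(x)+O(\delta^2),\qquad g(x)=(x-c_0)^T N(x-c_0)-2(x-c_0)^T M_0 b,
\]
and every quadratic polynomial $g$ with $g(c_0)=0$ is realized this way. Let $V=\{w\in\E:q(w)=1\}$ be the set of extremal points. It suffices to find such a $g$ with $g(v)>0$ and $g(w)<0$ for all $w\in V\setminus\{v\}$: for $\delta>0$ small, the perturbed ellipsoid then excludes $v$, strictly contains every other point of $\E$ (strictly interior points handled by continuity), keeps $M$ positive definite, and, since it sits in a fixed bounded neighborhood of $\conv(E)$ that meets $\ZZ^d$ in a finite set, gains no new lattice point.

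\emph{Constructing $g$.} The naive separator $-\|x-v\|^2$ satisfies $-\|v-v\|^2=0$ and $-\|w-v\|^2<0$ for $w\ne v$, but does not vanish at $c_0$. To fix this while preserving its values on $V$, add a multiple of $q(x)-1$, which vanishes on $V$ but equals $-1$ at $c_0$. Finally, add a small linear term to make $g(v)$ strictly positive:
\[
g(x)=-\|x-v\|^2+\|v-c_0\|^2\bigl(1-q(x)\bigr)+\epsilon\,(v-c_0)\cdot(x-c_0).
\]
Then $g(c_0)=-\|v-c_0\|^2+\|v-c_0\|^2-0=0$, as required. For $w\in V$ the middle term vanishes and $g(w)=-\|w-v\|^2+\epsilon\,(v-c_0)\cdot(w-c_0)$; in particular $g(v)=\epsilon\|v-c_0\|^2>0$ (using that $v\ne c_0$, since $v\in E$), while for the finitely many $w\in V\setminus\{v\}$ the leading negative term dominates for $\epsilon$ small.

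\emph{Main obstacle.} The subtle point is staying inside the class of ellipsoids while evicting only the chosen $v$: a naive concentric rescaling or pure translation of $E$ can eject several extremal points at once. The fix is to perturb the shape of the quadratic form itself, and the essential trick is that $q-1$ is a ``null polynomial'' on $V$, allowing one to correct the constant term of the target separator $-\|x-v\|^2$ without disturbing its behavior on the relevant points.
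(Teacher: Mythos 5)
Your proof is correct, but it takes a genuinely different route from the paper's on the main (removal) step. The paper argues by two elementary geometric moves: starting from a witness ellipsoid $E$ with $v\in E$ and $\E=\conv(E)\cap\ZZ^d$, it first replaces $E$ by its homothetic image with factor $1+\epsilon$ centered at $v$, which pushes every other point of $\E$ strictly inside and reduces to the case $\E\cap E=\{v\}$; it then translates $E$ by $\delta(z-v)$, where $z$ is the center, which expels $v$ while all other lattice points, now interior, stay put. You instead perturb the defining quadratic data in a single step, $(M_0,c_0)\mapsto(M_0+\delta N,\,c_0+\delta b)$, and solve the resulting first-order sign problem on the finitely many boundary lattice points with the separator $-\|x-v\|^2$, corrected by a multiple of the null polynomial $q-1$ and a small linear term; the conditions $g(v)>0$ and $g(w)<0$ for $w\in V\setminus\{v\}$ handle all boundary points simultaneously, so no preliminary normalization to $\E\cap E=\{v\}$ is needed. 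Both proofs confront the same obstacle, which you name explicitly (a concentric rescaling or pure translation can eject several extremal points at once); the paper's homothety centered at $v$ is precisely its fix, while yours is the $q-1$ correction trick. Your existence argument (concentric shrinking to $\lambda^*=\max_{w\in\E}q(w)$) coincides with the paper's homothetic contraction, with the degenerate case $\lambda^*=0$ treated explicitly. The paper's version buys brevity and avoids the realizability claim and the $O(\delta^2)$ bookkeeping (which is fine, since everything takes place in a fixed bounded region); yours makes the mechanism transparent and shows directly that any prescribed point of $\E\cap E$ can be evicted while all others are strictly interiorized. Two presentational quibbles: state explicitly that, since $v$ is extremal, the witness $E$ in the removal step is chosen with $v\in E$ (you silently use $q(v)=1$ and $v\ne c_0$); and your $V=\{w\in\E\ :\ q(w)=1\}$ is the set of points of $\E$ lying on this particular witness, not quite ``the set of extremal points'' (extremality quantifies over all witnesses) --- harmless, since the proof only ever uses the former description.
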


\begin{proof}
Let $\E=\conv(E)\cap\ZZ^d$ for an ellipsoid $E\subset\RR^d$. Applying an appropriate homothetic contraction, centered at the center of $E$, we can always achieve $\E\cap E\not=\emptyset$. In particular, $\mathcal E$ has an extremal point. For $v\in\E\cap E$, after changing $E$ to its homothetic image with factor $(1+\epsilon)$ and centered at $v$, where $\epsilon$ is a sufficiently small positive real number, we can further assume $\E\cap E=\{v\}$. Finally, applying a parallel translation to $E$ by $\delta(z-v)$, where $z$ is the center of $E$ and $\delta>0$ is a sufficiently small real number, we achieve $\conv(E)\cap\ZZ^d=\E\setminus\{v\}$.
\end{proof}

Next we complete the proof of Theorem (b). It follows from Lemma 3.2 that, for any natural number $d$ and an ellipsoidal set $\E\subset\ZZ^d$, there is a descending sequence of ellipsoidal sets of the form
\begin{align*}
\E=\E_k\supset \E_{k-1}\supset\ldots\supset\E_1,\ \text{with}\ \#\E_i=i\ \text{for}\ i=1,\ldots,k.
\end{align*}

By \cite[Theorem 6.5(c)]{Jumps}, for $d=3$, the $\conv(\E_i)$ are normal polytopes. Therefore, $*\le\conv(\E)$ in $\np(3)$ for some $*\in\ZZ^3$. Thus Lemma \ref{contraction} applies.\qed

\subsection{Alternative algorithmic proof in symmetric case} For the ellipsoids $E$ with center in $\frac12\ZZ^3$, there is a different proof of Theorem (b). It yields a simple algorithm for constructing a unimodular cover of $P(E)$. 

Instead of Theorem \ref{Sebo-thm} and \cite[Theorem 6.5]{Jumps} this approach uses Johnson's 1916 \emph{Circle Theorem} \cite{Circles1,Circles}. We only need Johnson's theorem to derive the following fact, which does not extend to higher dimensions: for any lattice $\Lambda\subset\RR^2$ and any ellipse $E'\subset\RR^2$, such that $\conv(E')$ contains a triangle with vertices in $\Lambda$, every parallel translate $\conv(E')+v$, where $v\in\RR^2$, meets $\Lambda$. 

Assume an ellipsoid $E\subset\RR^3$ has center in $\frac12\ZZ^3$ and $\dim(P(E))=3$ (notation as in the theorem). Assume $\U(P(E))\subsetneqq P(E)$. Because $\partial P(E)$ is triangulated by unimodular triangles, there is a unimodular triangle $T\subset P(E)$, not necessarily in $\partial P(E)$, and a point $x\in\inte T$, such that the points in $[0,x]$, sufficiently close to $x$, are not in $\U(P(E))$. For the plane, parallel to $T$ on lattice height 1 above $T$ and on the same side as $0$, the intersection $E'=\conv(E)\cap H$ is at least as large as the intersection of $\conv(E)$ with the affine hull of $T$: a consequence of the fact that $P(E)\cap\ZZ^3$ is symmetric relative to the center of $E$.  The mentioned consequence of Johnson's theorem implies that $\conv(E')$ contains a point $z\in\ZZ^3$. In particular, all points in $[x,0]$, sufficiently close to $x$ are in the unimodular simplex $\conv(T,z)\subset P(E)$, a contradiction.

\section{High dimensional ellipsoids}\label{High} 

For a lattice $\Lambda\subset\RR^d$, define a \emph{$\Lambda$-polytope} as a polytope  $P\subset\RR^d$ with $\vertex(P)\subset\Lambda$. Using $\Lambda$ as the lattice of reference instead of $\ZZ^d$,  one similarly defines \emph{$\Lambda$-normal polytopes} and \emph{$\Lambda$-ellipsoidal sets}.

Consider the lattice $\Lambda(d)=\ZZ^d+\ZZ\left(\frac12,\ldots,\frac12\right)\subset\RR^d$.
We have $\big[\ZZ^d:\Lambda(d)\big]=2$. Consider the $\Lambda(d)$-polytope
$P(d)=\conv\big(\B(d)\cap\Lambda(d)\big)$,
where $\B(d)=\big\{(\xi_1,\ldots,\xi_d)\ \big|\ \sum_{i=1}^d\big(\xi_i-\frac12\big)^2\le\frac d4\big\}\subset\RR^d$, i.e., $\partial(\B(d))$ is the circumscribed sphere for the cube $[0,1]^d$.

Consider the $d$-dimensional $\Lambda(d)$-polytope and the $(d-1)$-dimensional $\Lambda(d)$-simplex: 
\begin{align*}
&Q(d)=\conv\big((P(d)\cap\Lambda(d))\setminus\{\e_1+\cdots+\e_d\}\big),\\
&\Delta(d-1)=\conv\left(\e_1+\cdots+\e_{i-1}+\e_{i+1}+\cdots+\e_d\ |\ i=1,\ldots,d\right),
\end{align*}
where $\e_1,\ldots,\e_d\in\RR^d$ are the standard basic vectors.

\medskip\noindent\emph{Notice.} Although $P(d)\cap\ZZ^d=\{0,1\}^d$ for all $d$, yet $[0,1]^d\subsetneqq P(d)$ for all $d\ge4$. In fact, $\big(\frac12,\ldots,\frac12\big)+k\e_i\in P(d)\cap\Lambda(d)$ for $1\le i\le d$ and $-\left\lceil\frac{\sqrt d}2\right\rceil\le k\le\left\lfloor\frac{\sqrt d}2\right\rfloor$.

\begin{lemma}\label{corner}
If $d\ge5$ then $\Delta(d-1)$ is a facet of $Q(d)$ and 
$\Delta(d-1)\cap\Lambda(d)=\vertex(\Delta(d-1))$.
\end{lemma}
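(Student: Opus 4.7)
The plan is to realize $\Delta(d-1)$ as the face of $Q(d)$ cut out by the linear functional $\sigma(x)=x_1+\cdots+x_d$. The hyperplane $H=\{x\in\RR^d:\sigma(x)=d-1\}$ contains every vertex $\mathbf{v}_i:=\e_1+\cdots+\e_{i-1}+\e_{i+1}+\cdots+\e_d$ of $\Delta(d-1)$, and the generating set $S:=(P(d)\cap\Lambda(d))\setminus\{\e_1+\cdots+\e_d\}$ of $Q(d)$ contains $0,\e_1,\ldots,\e_d$, so $\dim Q(d)=d$. Thus, once I check that $\sigma(p)\le d-1$ for every $p\in S$ with equality exactly at the $\mathbf{v}_i$, it will follow that $Q(d)\cap H=\Delta(d-1)$ is a facet of $Q(d)$.

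For the $\sigma$-bound I would split $S$ by the two cosets of $\ZZ^d$ in $\Lambda(d)$. The integer points of $P(d)$ are $\{0,1\}^d$ (recalled in the notice), so after removing $\e_1+\cdots+\e_d$ the remaining integer points satisfy $\sigma\le d-1$ with equality precisely at the $\mathbf{v}_i$. A half-integer point of $S$ has the form $p=(\tfrac12,\ldots,\tfrac12)+\mathbf{k}$ with $\mathbf{k}\in\ZZ^d$ and $\|\mathbf{k}\|^2\le d/4$ (the defining inequality for $\B(d)$). The decisive elementary step is that $|n|\le n^2$ for every $n\in\ZZ$, so $\sum_i k_i\le\sum_i k_i^2\le d/4$, which yields $\sigma(p)\le d/2+d/4=3d/4$. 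For $d\ge 5$ one has $3d/4<d-1$, so every half-integer point of $S$ lies strictly below $H$. This is exactly where the hypothesis $d\ge 5$ enters, and I expect this half-integer bound to be the main obstacle; at $d=4$ the estimate $3d/4=d-1$ is sharp and half-integer points can actually land on $H$, so the statement genuinely fails in low dimension.

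For the second claim, every $\Lambda(d)$-point in $\Delta(d-1)$ lies in $H\cap[0,1]^d$. An integer such point has coordinates in $\{0,1\}$ summing to $d-1$, so exactly one coordinate is $0$ and the point is some $\mathbf{v}_i$. A half-integer point $\mathbf{k}+(\tfrac12,\ldots,\tfrac12)$ with all coordinates in $[0,1]$ forces every $k_i=0$, leaving only the candidate $(\tfrac12,\ldots,\tfrac12)$, whose coordinate sum $d/2$ differs from $d-1$ for $d\ge 3$. Combined with the facet step, this completes the proof.
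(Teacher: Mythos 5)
Your proposal is correct and follows essentially the same route as the paper: you split $P(d)\cap\Lambda(d)$ by the two cosets of $\ZZ^d$ in $\Lambda(d)$ and, for the half-integer coset, use the same key estimate ($k_i\le k_i^2$ for integers, combined with $\sum k_i^2\le d/4$ from the ball inequality) that the paper phrases as the contradiction $\frac d4\ge\frac d2-1$ for $d\ge5$. The only cosmetic differences are that you bound $\sigma$ on half-integer points directly by $3d/4<d-1$ rather than arguing by contradiction, and you verify $\Delta(d-1)\cap\Lambda(d)=\vertex(\Delta(d-1))$ by a separate coordinate check in $H\cap[0,1]^d$, whereas the paper extracts both claims from one classification of the points with $\xi_1+\cdots+\xi_d\ge d-1$.
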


\begin{proof}
Assume $x=(\xi_1,\ldots,\xi_d)\in P(d)\cap\Lambda(d)$ satisfies
$\xi_1+\cdots+\xi_d\ge d-1$. We claim that there are only two possibilities: either $x=\e_1+\cdots+\e_d$ or $x=\e_1+\cdots+\e_{i-1}+\e_{i+1}+\cdots+\e_d$ for some index $i$. Since $P(d)\cap\ZZ^d=\{0,1\}^d$, only the case $x\in\left(\frac12,\ldots,\frac12\right)+\ZZ^d$ needs to be ruled out. Assume $\xi_i=\frac12+a_i$ for some integers $a_i$, where $i=1,\ldots,d$. Then we have the inequalities
\begin{align*}
\sum_{i=1}^da_i^2\le\frac d4\quad\text{and}\quad\sum_{i=1}^d a_i\ge\frac d2-1.
\end{align*}
Since the $a_i$ are integers we have $\frac d4\ge\frac d2-1$, a contradiction because $d\ge5$.
\end{proof}

\begin{lemma}\label{non_normal}
For every even natural number $d\ge6$, there exists a point in $\big(\frac d2\cdot Q(d)\big)\cap\Lambda(d)$ which does not have a representation of the form $x_1+\cdots+x_{\frac d2}$ with $x_1,\ldots,x_{\frac d2}\in Q(d)\cap\Lambda(d)$. In particular, $Q(d)$ is not $\Lambda(d)$-normal.
\end{lemma}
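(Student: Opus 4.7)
My plan is to exhibit an explicit witness point on the ``half-scale'' of $Q(d)$, namely the multiple of the barycenter of the facet $\Delta(d-1)$, and to show that any decomposition is forced into the vertices of $\Delta(d-1)$ by a coordinate-sum argument. Concretely, take $v=\frac{d-1}{2}(\e_1+\cdots+\e_d)$. Since $d$ is even, $\frac{d-2}{2}\mathbf{1}\in\ZZ^d$ and $\frac12\mathbf{1}\in\Lambda(d)$, so $v\in\Lambda(d)$; and $v=\frac{d}{2}\cdot\frac{d-1}{d}\mathbf{1}$ is $\frac{d}{2}$ times the barycenter of the facet $\Delta(d-1)\subset Q(d)$, hence $v\in\frac{d}{2}\cdot Q(d)$.

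The main input is a sharp bound on the coordinate-sum on $Q(d)\cap\Lambda(d)$: I claim that $\sum_{i=1}^d \xi_i\le d-1$ for every $(\xi_1,\ldots,\xi_d)\in Q(d)\cap\Lambda(d)$, and that equality forces the point to be a vertex of $\Delta(d-1)$. For points in $\{0,1\}^d\setminus\{\mathbf{1}\}$ this is obvious. For a half-integer point $x=\frac12\mathbf{1}+(a_1,\ldots,a_d)$ with $a_i\in\ZZ$ and $\sum a_i^2\le d/4$, Cauchy--Schwarz gives $\sum a_i\le\sqrt{d\cdot d/4}=d/2$, with equality only when all $a_i=1/2$, impossible for integers; hence $\sum a_i\le d/2-1$ and $\sum\xi_i\le d-1$. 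Comparing with Lemma \ref{corner} then pins down the equality cases as the vertices $\mathbf{1}-\e_i$ of $\Delta(d-1)$.

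With this bound, suppose $v=x_1+\cdots+x_{d/2}$ with $x_j\in Q(d)\cap\Lambda(d)$. Summing coordinates on both sides yields $\frac{d(d-1)}{2}$ on the left, at most $\frac{d}{2}(d-1)$ on the right, and equality forces each $x_j$ to realize the maximum $d-1$. By the preceding paragraph, each $x_j=\mathbf{1}-\e_{k_j}$ for some index $k_j$. Then
\begin{equation*}
x_1+\cdots+x_{d/2}=\tfrac{d}{2}\mathbf{1}-\sum_{j=1}^{d/2}\e_{k_j},
\end{equation*}
and equating this with $v=\frac{d-1}{2}\mathbf{1}$ gives $\sum_j\e_{k_j}=\frac12\mathbf{1}$, which is impossible because the left-hand side is an integer vector. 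This contradiction establishes the lemma, and the non-$\Lambda(d)$-normality of $Q(d)$ is then immediate.

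The only place where something could go wrong is in the coordinate-sum bound, so the key step to verify carefully is the Cauchy--Schwarz estimate and its strict improvement over integers; everything else is arithmetic bookkeeping. Note also that the construction uses $d\ge 6$ in two ways: Lemma \ref{corner} requires $d\ge 5$ to ensure the half-integer corner is excluded, and the parity $d$ even is used to place $v$ in $\Lambda(d)$ and to make $d/2$ an integer number of summands.
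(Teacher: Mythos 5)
Your proof is correct and follows essentially the same route as the paper: the same witness point $\frac d2\cdot\beta(d)=\frac{d-1}2(\e_1+\cdots+\e_d)$, and the same forcing of all summands into $\vertex(\Delta(d-1))$ via the coordinate-sum functional together with Lemma \ref{corner}. The only (cosmetic) differences are that you re-derive the bound $\sum\xi_i\le d-1$ by Cauchy--Schwarz, where the paper reads it off from Lemma \ref{corner} directly, and that your endgame is the parity identity $\sum_j\e_{k_j}=\frac12(1,\ldots,1)$ instead of the paper's observation that an interior point of $c\Delta(d-1)$ requires $c\ge d$ vertex summands --- both are valid one-line contradictions.
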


\begin{proof}
Consider the baricenter $\beta(d)=\frac{d-1}d\cdot(\e_1+\cdots+\e_d)$ of $\Delta(d-1)$. The point $\frac d2\cdot\beta(d)$ is the baricenter of the dilated simplex $\frac d2\cdot\Delta(d-1)$ and, simultaneously, a point in $\Lambda(d)$. Assume $\frac d2\cdot\beta=x_1+\cdots+x_{\frac d2}$ for some $x_1,\ldots,x_{\frac d2}\in Q(d)\cap\Lambda(d)$. Lemma \ref{corner} implies $x_1,\ldots,x_{\frac d2}\in\Delta(d-1)\cap\Lambda(d)=\vertex(\Delta(d-1))$. But this is not possible because the dilated $(d-1)$-simplex $c\Delta(d-1)$ has an interior point of the form $z_1+\cdots+z_c$ with $z_1,\ldots,z_c\in\vertex(\Delta(d-1))$ only if $c\ge d$.
\end{proof}

\begin{proof}[Proof of Theorem (c)] Since $\e_1,\ldots,\e_d,(\frac12,\ldots,\frac12)\in Q(d)$ we have the equality $\gp(Q(d))=\Lambda(d)$. By Lemmas \ref{cutoff} and \ref{corner}, the set $Q(d)\cap\Lambda(d)$ is $\Lambda(d)$-ellipsoidal for $d\ge5$. By applying a linear transformation, mapping $\Lambda(d)$ isomorphically to $\ZZ^d$, Lemma \ref{non_normal} already implies Theorem (c) for $d$ even. 

One involves all dimensions $d\ge6$ by observing that (i) if $\E\subset\RR^d$ is an ellipsoidal set then $\E\times\{0,1\}\subset\RR^{d+1}$ is also ellipsoidal and (ii) the normality of $\conv(\E\times\{0,1\})$ implies that of $\conv(\E)$. While (ii) is straightforward, for (i) one applies an appropriate affine transformation to achieve $\E=\conv(S^{d-1})\cap\Lambda$, where $S^{d-1}\subset\RR^d$ is the unit sphere, and $\Lambda\subset\RR^d$ is a shifted lattice. In this case the ellipsoid
$E=\big\{(\xi_1,\ldots,\xi_d)\ \ \bigg|\ \ \frac{\xi_1^2}{a^2}+\cdots+\frac{\xi_{d-1}^2}{a^2}+\frac{\xi_d^2}{a^2}+\frac{(\xi_{d+1}-\frac12)^2}{b^2}=1\big\}\subset\RR^{d+1}$ with $b>\frac12$ and $a=\frac{2b}{\sqrt{4b^2-1}}$, is within the $(b-\frac12)$-neighborhood of the region of $\RR^{d+1}$ between the hyperplanes $(\RR^d,0)$ and $(\RR^d,1)$ and satisfies the following conditions: $E\cap(\RR^d,0)=(S^{d-1},0)$ and $E\cap(\RR^d,1)=(S^{d-1},1)$. In particular, when $\frac12<b<\frac32$ we have $\conv(E)\cap(\Lambda\times\ZZ)=\E\times\{0,1\}$.
\end{proof}

\begin{remark}\label{Baranov}
The definition of a normal polytope in the introduction is stronger than the one in \cite[Definition 2.59]{Kripo}: the former is equivalent to the notion of an \emph{integrally closed} polytope, whereas `normal' in the sense of \cite{Kripo} is equivalent to $\gp(P)$-normal. Examples of $\gp(P)$-normal polytopes, which are not normal, are lattice non-unimodular simplices, whose only lattice points are the vertices. Lemma \ref{corner} and the proof of Lemma \ref{non_normal} show that the 5-simplex $\Delta(5)$ is not $\Lambda(6)$-unimodular. Applying an appropriate affine transformation we obtain a lattice non-unimodular simplices $\Delta'\subset\RR^5$ with $\vertex(\Delta')$ ellipsoidal. Such examples in $\RR^5$ have been known sine the 1970s: a construction of Voronoi \cite{Baran} yields a lattice $\Lambda\subset\RR^5$ and a 5-simplex $\Delta\subset\RR^5$ of $\Lambda$-multiplicity 2, whose circumscribed sphere does not contain points of $\Lambda$ inside except $\vertex(\Delta)$.

We do not know whether there are ellipsoidal subsets $\E\subset\RR^5$ with $\conv(\E)$ non-normal and $\gp(\conv(\E))=\ZZ^5$. For instance, $Q(5)$ is $\Lambda(5)$-normal, as checked by \textsf{Normaliz} \cite{Normaliz}.
\end{remark}

\noindent{\bf Acknowledgment.} We thank the referees for the streamlined version of the original proof of Theorem (a), bringing \cite{Baran} to our attention, and spotting several inaccuracies.

\bibliographystyle{plain}
\bibliography{bibliography}

\begin{thebibliography}{10}

\bibitem{Aguzzoli}
Stefano Aguzzoli and Daniele Mundici.
\newblock An algorithmic desingularization of {$3$}-dimensional toric
  varieties.
\newblock {\em Tohoku Math. J. (2)}, 46(4):557--572, 1994.

\bibitem{Baran}
E.~P. Baranovski\v{\i}.
\newblock Volumes of {$L$}-simplexes of five-dimensional lattices.
\newblock {\em Mat. Zametki}, 13:771--782, 1973.

\bibitem{Ample}
Matthias Beck, Jessica Delgado, Joseph Gubeladze, and Mateusz Micha{\l}ek.
\newblock Very ample and {K}oszul segmental fibrations.
\newblock {\em J. Algebraic Combin.}, 42(1):165--182, 2015.

\bibitem{3Smooth}
Matthias Beck, Christian Haase, Akihiro Higashitani, Johannes Hofscheier,
  Katharina Jochemko, Lukas Katth\"{a}n, and Mateusz Micha{\l}ek.
\newblock Smooth centrally symmetric polytopes in dimension 3 are {IDP}.
\newblock {\em Ann. Comb.}, 23(2):255--262, 2019.

\bibitem{Bouvier}
Catherine Bouvier and G{\'e}rard Gonzalez-Sprinberg.
\newblock Syst\`eme g\'en\'erateur minimal, diviseurs essentiels et
  {$G$}-d\'esingularisations de vari\'et\'es toriques.
\newblock {\em Tohoku Math. J. (2)}, 47(1):125--149, 1995.

\bibitem{Unico}
Winfried Bruns and Joseph Gubeladze.
\newblock Normality and covering properties of affine semigroups.
\newblock {\em J. Reine Angew. Math.}, 510:161--178, 1999.

\bibitem{Kripo}
Winfried Bruns and Joseph Gubeladze.
\newblock {\em Polytopes, rings, and $K$-theory}.
\newblock Springer Monographs in Mathematics. Springer-Verlag, New York, 2009.

\bibitem{NON-ICP}
Winfried Bruns, Joseph Gubeladze, Martin Henk, Alexander Martin, and Robert
  Weismantel.
\newblock A counterexample to an integer analogue of {C}arath\'{e}odory's
  theorem.
\newblock {\em J. Reine Angew. Math.}, 510:179--185, 1999.

\bibitem{Jumps}
Winfried Bruns, Joseph Gubeladze, and Mateusz Micha{\l}ek.
\newblock Quantum {J}umps of {N}ormal {P}olytopes.
\newblock {\em Discrete Comput. Geom.}, 56(1):181--215, 2016.

\bibitem{Normaliz}
Winfried Bruns, Bogdan Ichim, Tim R\"omer, and Christof S\"oger.
\newblock \textsf{{N}ormaliz}.
\newblock Available from http://www.math.uos.de/normaliz/.

\bibitem{3Covers}
Giulia Codenotti and Francisco Santos.
\newblock Unimodular covers of 3-dimensional parallepipeds and {C}aley sums.
\newblock {\em Preprint}, arXiv:1907.12312.

\bibitem{ICP}
William Cook, Jean Fonlupt, and Alexander Schrijver.
\newblock An integer analogue of {C}arath\'{e}odory's theorem.
\newblock {\em J. Combin. Theory Ser. B}, 40(1):63--70, 1986.

\bibitem{Circles1}
Frank Jackson and Eric~W. Weisstein.
\newblock Johnson circles.
\newblock https://mathworld.wolfram.com/JohnsonCircles.html.

\bibitem{Circles}
Roger~A. Johnson.
\newblock A {C}ircle {T}heorem.
\newblock {\em Amer. Math. Monthly}, 23(5):161--162, 1916.

\bibitem{Schrijver}
Alexander Schrijver.
\newblock {\em Theory of linear and integer programming}.
\newblock Wiley-Interscience Series in Discrete Mathematics. John Wiley \&
  Sons, Ltd., Chichester, 1986.
\newblock A Wiley-Interscience Publication.

\bibitem{Sebo}
Andras Seb\H{o}.
\newblock Hilbert bases, {C}arath\'eodory's theorem and combinatorial
  optimization.
\newblock {\em Proc. of the IPCO conference (Waterloo, Canada)}, pages
  431--455, 1990.

\end{thebibliography}

\end{document}